\documentclass[a4paper,abstracton]{scrartcl}
\usepackage[utf8]{inputenc}
\usepackage{amsfonts,amsthm,amssymb,amsmath}
\usepackage[inline]{enumitem}
\usepackage{color}
\usepackage{graphicx} 
\usepackage{authblk}

\newtheorem{thm}{Theorem}
\newtheorem{lem}[thm]{Lemma}
\newtheorem{cor}[thm]{Corollary}
\newtheorem{con}[thm]{Conjecture}

\theoremstyle{remark}
\newtheorem{rmk}[thm]{Remark}

\DeclareMathOperator{\Aut}{Aut}
\DeclareMathOperator{\id}{id}
\DeclareMathOperator{\domain}{dom}
\DeclareMathOperator{\symptoms}{symp}
\DeclareMathOperator{\sync}{\stackrel{sync}{\sim}}

\title{Distinguishing infinite graphs with bounded degrees }
\author[1]{Florian Lehner}
\author[2]{Monika Pil{\'s}niak} 
\author[2]{Marcin Stawiski}
\affil[1]{Mathematics Institute, University of Warwick, \protect\\ Coventry, CV4 7AL, UK}
\affil[2]{AGH University, Department of Discrete Mathematics, \protect\\al. Mickiewicza 30, 30-059 Krakow, Poland}

\begin{document}
\maketitle
\begin{abstract}
Call a colouring of a graph distinguishing, if the only colour preserving automorphism is the identity. A conjecture of Tucker states that if every automorphism of a graph $G$ moves infinitely many vertices, then there is a distinguishing $2$-colouring. We confirm this conjecture for graphs with maximum degree $\Delta \leq 5$. Furthermore, using similar techniques we show that if an infinite graph has maximum degree $\Delta \geq 3$, then it admits a distinguishing colouring with $\Delta - 1$ colours. This bound is sharp.
\end{abstract}

\section{Introduction}
 A distinguishing $k$-colouring of a graph $G = (V,E)$ is a map $c\colon V \to \{0,\dots, k-1\}$ such that the identity is the only automorphism $\gamma$ with $c \circ \gamma = c$. The distinguishing number $D(G)$ is the least $k$ such that $G$ admits a distinguishing $k$-colouring. This notion was first introduced by Albertson and Collins \cite{alco-96} and has since received considerable attention. 
 
In this paper, we investigate connections between bounds on the maximum degree of a graph and its distinguishing number. This connection was first studied in \cite{collins-trenk-distinguishingchromatic} and \cite{klavzar-wong-zhu-distinguishinggroupaction}, where it was shown that connected finite graphs with maximum degree $\Delta$ satisfy $D(G) \leq \Delta + 1$. Equality holds if and only if $G$ is $C_5$, or $K_r$ or $K_{r,r}$ for some $r \geq 1$. By \cite{imrich}, this remains true for infinite graphs, that is, $D(G) \leq \Delta$ for every connected infinite graph.  Recently, H\"uning et al.~\cite{draft} determined the distinguishing numbers of all (finite and infinite) connected graphs of maximum degree $\Delta \leq 3$.
 
Our main result is motivated by the following conjecture by Tucker \cite{tucker} which arguably is one of the most intriguing open questions related to distinguishing numbers of infinite graphs. 
\begin{con}
\label{con:infinitemotion}
If every non-trivial automorphism of a locally finite graph $G$ moves infinitely many vertices, then $G$ admits a distinguishing $2$-colouring.
\end{con}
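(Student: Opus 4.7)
The plan is to construct the required distinguishing $2$-colouring inductively via a breadth-first enumeration from a root. Fix $r \in V(G)$ and list $V(G) = \{v_0 = r, v_1, v_2, \dots\}$ so that every $v_i$ with $i \geq 1$ is adjacent to some $v_j$ with $j < i$; such an enumeration exists because $G$ is locally finite and hence countable. I would define $c \colon V(G) \to \{0,1\}$ step by step, choosing $c(v_n) \in \{0,1\}$ greedily once $c(v_0), \dots, c(v_{n-1})$ have been fixed.

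For each stage $n$, let $A_n$ denote the set of $\gamma \in \Aut(G)$ \emph{compatible} with the partial colouring so far, meaning $c(\gamma(v_i)) = c(v_i)$ for all $i \le n$ such that both $v_i$ and $\gamma(v_i)$ are already coloured. The objective at step $n$ is to pick $c(v_n)$ so as to eliminate as many non-trivial elements of $A_n$ as possible, with the ultimate goal $\bigcap_n A_n = \{\id\}$. The infinite motion hypothesis is essential here: for every non-trivial $\gamma$ there are infinitely many $i$ with $\gamma(v_i) \neq v_i$, giving infinitely many opportunities to rule $\gamma$ out at future stages. Local finiteness equips $\Aut(G)$ with the permutation topology, making the pointwise stabilizer of any finite set compact, so that the sets $A_n$ form a decreasing chain of closed subsets which varies in a controlled way and a careful book-keeping argument on $A_0 \supseteq A_1 \supseteq \cdots$ has a chance to succeed.

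The main obstacle is the severe tightness of working with only two colours. Each step provides just a binary choice, yet many candidate automorphisms may need to be excluded simultaneously, and a colour killing one can easily preserve another; in particular, it is not at all clear how to guarantee that at every stage there is a colour choice making genuine progress against every surviving $\gamma$. For graphs of small maximum degree the local picture at each vertex is combinatorially simple, the number of distinct local automorphism patterns is small, and the branching of candidate automorphisms is much easier to control by case analysis — this is the route the paper takes to handle $\Delta \le 5$. For arbitrary $\Delta$ the combinatorial explosion is harder to tame; a successful approach would likely require either a quantitative motion lemma in the spirit of the probabilistic method, or a finer structural theory of $\Aut(G)$ based on its ends and fixed-point behaviour, which is why the conjecture remains open in full generality.
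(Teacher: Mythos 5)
The statement you were asked about is Conjecture~\ref{con:infinitemotion}, which is \emph{open}: the paper does not prove it, and neither do you. What the paper actually proves is the special case $\Delta \leq 5$ (Theorem~\ref{thm:five}), together with the weaker general bound $D(G)\leq \Delta/3 + 1$ under infinite motion. Your write-up is honest about this --- it is a discussion of the difficulty rather than a proof --- so there is no hidden gap to expose, but it is worth being precise about where the greedy scheme you sketch actually breaks and how the paper gets around it in the bounded-degree case.

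The concrete obstruction in your outline is that ``eliminate as many non-trivial elements of $A_n$ as possible'' is not a well-defined strategy: $A_n$ is in general an uncountable closed subset of the (compact, totally disconnected) stabiliser of the already-coloured finite set, a single binary choice at $v_n$ can split it into two closed pieces of comparable ``size,'' and there is no obvious potential function that must reach $\{\id\}$. Infinite motion alone guarantees that each fixed non-trivial $\gamma$ is \emph{eventually} vulnerable, but it does not prevent the construction from getting stuck against a whole family of automorphisms simultaneously, and a countable diagonalisation over automorphisms is unavailable because $\Aut(G)$ need not be countable. The paper's proof for $\Delta\leq 5$ does not try to directly shrink $A_n$; instead it maintains invariants on a growing \emph{partial} colouring (the notions of charted vertices, moving $k$-tuples with $k\leq 3$, synchronised tuples, and healthy monochromatic components) so that at every stage only a bounded local case analysis is needed. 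The crucial degree-dependent step is Lemma~\ref{lem:synchronise}, which forces synchronisation between moving tuples of size at most $3$ and fails for quadruples --- exactly the combinatorial explosion you flag. So your diagnosis of the difficulty is correct, and the paper's contribution is a structured bookkeeping framework that tames it for small $\Delta$ rather than the ``careful book-keeping on $A_0\supseteq A_1\supseteq\cdots$'' you gesture at.
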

While this conjecture is known to be true for various graph classes (see for example \cite{draft,imrich, smith, lehner-random, lehner-intermediate,lehner-edge, Watkins}), it is still wide open in general. By results from \cite{draft}, the conjecture is true for graphs with maximum degree $\Delta \leq 3$. Our main theorem is a further step towards confirming Tucker's conjecture for graphs with bounded degrees.

\begin{thm}
\label{thm:five}
Let $G$ be a connected graph with maximum degree $\Delta \leq 5$ in which every non-trivial automorphism moves infinitely many vertices, then $D(G) \leq 2$.
\end{thm}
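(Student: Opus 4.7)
My plan is to build a distinguishing 2-colouring inductively via a breadth-first search from a carefully chosen root $v_0$. Let $B_r$ and $S_r$ denote the ball and sphere of radius $r$ about $v_0$, and let $\Gamma_r \leq \Aut(G)$ denote the group of automorphisms preserving the partial colouring on $B_r$. I would colour $v_0$ and a bounded neighbourhood first, then extend one sphere at a time, aiming for $\bigcap_r \Gamma_r = \{\id\}$. The infinite-motion hypothesis guarantees that any non-trivial $\gamma$ moves some vertex of $B_r$ for sufficiently large $r$, so it suffices to eliminate each non-trivial automorphism at some finite stage.

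As an anchoring step, I would colour a ball $B_{r_0}$ of small radius so that the coloured rooted ball at $v_0$ is not isomorphic (as a coloured rooted graph) to the coloured rooted ball at any other vertex $w$. Because $\Delta \leq 5$, the number of rooted isomorphism classes of coloured balls of radius $r_0$ is finite, so such an anchor colouring exists for a generic root $v_0$ once $r_0$ is chosen appropriately. This forces every colour-preserving automorphism to fix $v_0$. Highly symmetric exceptional cases where no anchor exists (for instance, vertex-transitive graphs like the infinite $5$-regular tree) would be isolated and treated by separate constructions tailored to their structure.

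The core of the argument is an extension lemma: given a colouring of $B_r$ with surviving group $\Gamma_r$, there exists a 2-colouring of $S_{r+1}$ that eliminates every $\gamma \in \Gamma_r$ whose restriction to $B_{r+1}$ is non-trivial. Since each vertex in $S_r$ has at most four "children" in $S_{r+1}$ under a BFS, the possible actions of $\Gamma_r$ on $S_{r+1}$ are tightly constrained; I would encode this as a bipartite incidence between colourings of $S_{r+1}$ and obstructing group elements, and establish via either a greedy choice or a probabilistic double-counting that strictly fewer than $2^{|S_{r+1}|}$ colourings can simultaneously fail against at least one obstruction. Automorphisms in $\Gamma_r$ that act trivially on $S_{r+1}$ are deferred: infinite motion ensures they show themselves on a later sphere.

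I expect the main obstacle to be the extension lemma itself, particularly the interaction between obstructing automorphisms whose actions on $S_{r+1}$ overlap heavily, which can make simultaneous elimination infeasible without a finer structural argument. The bound $\Delta \leq 5$ is presumably the threshold at which the local counting on a single sphere still closes: each "parent" has only up to four children, giving just enough freedom to separate the $\Gamma_r$-orbits of $S_{r+1}$ with two colours. For this reason I anticipate a case analysis driven by the $\Gamma_r$-orbit structure on $S_{r+1}$: orbits with small stabiliser can be coloured freely, while highly symmetric orbits require a more delicate assignment that may exploit infinite motion to postpone the obligation to a later stage. The exceptional, highly symmetric graphs, whenever they can be identified via the local analysis, will be handled as a finite list of special cases.
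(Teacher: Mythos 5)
There is a genuine gap, and it is concentrated exactly where you anticipate trouble: the extension lemma. The counting claim --- that fewer than $2^{|S_{r+1}|}$ colourings can fail --- is false in general. If some $v \in S_r$ has four children in $S_{r+1}$ forming a single $\Gamma_r$-orbit on which $\Gamma_r$ acts as the full symmetric group $S_4$, then \emph{every} $2$-colouring of those four children has a non-trivial stabiliser (the minimum stabiliser of a $2$-colouring of a $4$-set under $S_4$ has order $4$), so no greedy or probabilistic choice can eliminate all obstructions on that sphere. You say such automorphisms can be ``deferred,'' but you offer no mechanism ensuring that earlier colour choices do not permanently trap you: deferral only works if you carry an invariant guaranteeing the obstruction is resolvable later, and your proposal has no such invariant. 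The paper's actual reason why $\Delta \le 5$ matters is not that ``at most four children per parent still lets counting close''; rather, it is a structural lemma (Lemma~\ref{lem:synchronise}) that orbits of size at most $3$ with a cross-adjacency can only be joined by a matching or a $6$-cycle, which yields a canonical bijection synchronising the action across such orbits. This fails for orbits of size $4$ (two disjoint $K_{2,2}$'s between quadruples), which is precisely the quoted remark in the paper, and it is \emph{orbit} size, not child count per parent, that is constrained. The paper maintains the invariant ``no moving $k$-tuples for $k \ge 4$'' inductively; you would need some replacement for this.

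The anchoring step is also underjustified. Choosing a root $v_0$ with a coloured ball not isomorphic to any other coloured rooted ball cannot work for all non-exceptional graphs with a fixed $r_0$, and your fallback (``the exceptional, highly symmetric graphs ... will be handled as a finite list'') is not supported: there is no reason such graphs form a finite or cleanly describable class. The paper avoids this entirely by anchoring on a global structure rather than a vertex: it fixes a cycle $C$ with a geodesic ray $P$ attached, sets $R = P + C - s$, colours $R$ with colour $0$, and then uses the notion of ``healthy'' colourings to guarantee that $R$ is the unique monochromatic colour-$0$ component that is a ray with pendant leaves and no vertex of high inner degree --- this is the global distinguishing feature, recoverable in the limit, that replaces your local anchor. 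Finally, your sphere-by-sphere scheme where the $i$-th stage colours exactly $S_{i}$ is more rigid than what the paper does: the paper colours a finite but irregular region each step (the neighbourhood of a small moving tuple plus symptom repair), and the freedom to colour ``a bit ahead'' is what lets it maintain the no-large-tuples invariant and heal symptoms before they propagate.
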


It is worth noting that most of the proof techniques generalise well to higher degrees. In fact, the only part of the proof that seems to break down for $\Delta > 5$ is Lemma \ref{lem:synchronise}. It is thus conceivable that a suitable generalisation of this lemma would lead to a further improvement of our bound.

Using similar proof techniques, we are also able to prove the following general bound for the distinguishing number of infinite graphs with bounded maximum degree, improving on the bound from \cite{imrich} mentioned above. It is easy to see that our bound is tight for every $\Delta \geq 3$.

\begin{thm}
\label{thm:main}
Let $G$ be an infinite connected graph with maximum degree $\Delta \geq 3$, then $D(G) \leq \Delta -1$.
\end{thm}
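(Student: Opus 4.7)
The plan is to construct a distinguishing $(\Delta-1)$-colouring using a BFS spanning tree rooted at a well-chosen vertex, exploiting the fact that every non-root vertex has at most $\Delta-1$ BFS-children, which matches the available palette exactly.

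First I would fix a root $v_0 \in V(G)$ and let $L_i = \{v : d(v,v_0) = i\}$; since $G$ is connected, infinite, and locally finite (as $\Delta < \infty$), each $L_i$ is finite and infinitely many are non-empty. I would construct $c \colon V \to \{0,1,\dots,\Delta-2\}$ level-by-level subject to the local rule that for every $u \in L_i$ with $i \geq 1$, the children $N(u) \cap L_{i+1}$ receive pairwise distinct colours; this is possible because $u$ has at least one neighbour in $L_{i-1}$ and hence at most $\Delta-1$ children. The rule has the key consequence that if $\gamma \in \Aut(G)$ is colour-preserving and fixes some $u \in L_i$ with $i \geq 1$, then $\gamma$ permutes $N(u) \cap L_{i+1}$ while preserving colours, and by distinctness must fix every child of $u$. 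Consequently, as soon as $\gamma$ is known to fix both $v_0$ and every vertex of $L_1$, induction on $i$ yields $\gamma = \id$.

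The main obstacle is therefore to handle $v_0$ and $L_1$, where the distinct-children rule cannot be imposed directly: the root can have up to $\Delta$ neighbours, one too many for the palette, so at least two neighbours of $v_0$ may share a colour, and a priori $\gamma$ could map $v_0$ to some other vertex altogether. To force $\gamma(v_0) = v_0$, I would fix an infinite ray $v_0 = w_0, w_1, w_2, \dots$ in $G$ (which exists by König's lemma applied to the locally finite BFS tree) and arrange, during the level-by-level construction, that the colours along this ray, together with those in a canonical neighbourhood of it, realise an aperiodic pattern that does not occur anywhere else in $G$, so that any colour-preserving automorphism must preserve the ray and in particular its starting point. To force $\gamma$ to fix $L_1$ pointwise once $\gamma(v_0) = v_0$ is known, I would use the remaining factorial freedom of the distinct-children rule inside the BFS subtrees below $L_1$ to ensure that the coloured subtrees rooted at any two equicoloured neighbours of $v_0$ are non-isomorphic as coloured rooted trees.

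The hardest part will be verifying that these three requirements --- the distinct-children rule, the aperiodic anchor ray, and the pairwise non-isomorphic subtrees --- can always be satisfied simultaneously, in particular in $\Delta$-regular graphs where symmetry is maximal and local choices tend to collide. The $(\Delta-1)!$ essentially independent orderings available at each internal vertex provide substantial freedom, but a careful combinatorial bookkeeping (presumably analogous to the one used in the proof of Theorem~\ref{thm:five}, which is why the authors note that the two proofs use similar techniques) will be required to make the construction genuinely work.
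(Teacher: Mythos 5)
Your proposal shares the paper's central mechanic --- grow the coloured region outward from a seed, giving each batch of newly coloured ``siblings'' pairwise distinct colours, so that once the seed is pinned down an induction on distance forces the automorphism to be trivial --- but the step you flag as ``the hardest part'' is exactly the step the paper's proof is built around, and your sketch of it does not obviously go through.

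Two concrete problems. First, the ``aperiodic pattern that does not occur anywhere else in $G$'' is not something you can manufacture by local choices in a graph with lots of symmetry: the distinct-children rule already constrains the colours (e.g.\ when a vertex has exactly $\Delta-1$ children all of $\{0,\dots,\Delta-2\}$ must appear), and in a vertex-transitive graph any finite coloured pattern that you place along a ray will, combinatorially, be at risk of recurring elsewhere; you would need a global argument, not a local one, and nothing in the proposal supplies it. Second, your fallback of making the coloured BFS-subtrees below equicoloured neighbours of $v_0$ pairwise non-isomorphic is itself essentially a full distinguishing-colouring problem for subtrees (and those ``subtrees'' need not be disjoint in $G$, since two neighbours of $v_0$ can share further neighbours). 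The paper sidesteps both issues at once by not using a single root vertex: it seeds the construction with a set $R$ that is already structurally asymmetric (a cycle $C$ with one vertex $s$ removed, glued to a geodesic ray $P$; or, if $G$ is a tree with leaves, a ray starting at a leaf), colours all of $R$ with colour $0$, and then in the growth step colours each equivalence class of new vertices with distinct colours while \emph{avoiding colour $0$ whenever the class has size at most $\Delta-2$}. This colour-avoidance rule is the key trick missing from your proposal: it lets one prove that $R$ is the \emph{unique} monochromatic ray of colour $0$ whose first vertex is a leaf or has a common neighbour outside $R$ with another vertex of $R$, so every colour-preserving automorphism fixes $R$ pointwise and the induction then closes. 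Without an analogue of that trick, the anchoring in your version remains an open gap, and the proposal is not a complete proof.
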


\section{Preliminaries}
All graphs in this paper are simple, connected and locally finite. A multirooted graph $(G,R)$ is a graph $G$ together with a set $R \subseteq V$ of roots. Note that every graph is a multirooted graph with $R = \emptyset$. An automorphism of a multirooted graph $(G,R)$ is an automorphism of $G$ which fixes $R$ pointwise. The group of automorphisms of $(G,R)$ is denoted by $\Aut (G,R)$.

A partial colouring of a graph $G$ is a function $c$ from $S \subseteq V$ to a set $C$ of colours. We denote the domain $S$ of a partial colouring by $\domain(c)$. Call two partial colourings $c$ and $c'$ \emph{compatible}, if they coincide on $\domain (c) \cap \domain (c')$. For compatible colourings $c$ and $c'$ we define the partial colouring $c \cup c'$ with domain $\domain(c) \cup \domain(c')$ by
\[
	(c \cup c')(x) := 
	\begin{cases}
	c(x) & x \in \domain(c), \\
	c'(x) & x \in \domain(c').
	\end{cases}
\]
Note that this is well defined since we required the colourings to be compatible.

We say that a partial colouring $c'$ \emph{extends} a partial colouring $c$, if $\domain (c) \subseteq \domain (c')$ and $c'$ coincides with $c$ on $\domain (c)$. Note that in this case $c \cup c' = c'$. Call a sequence $(c_i)_{i \in \mathbb N}$ of partial colourings \emph{increasing}, if $c_{i+1}$ extends $c_i$ for every $i \in \mathbb N$. For an increasing sequence of colourings, define the limit colouring by
\[
\lim_{i \to \infty} c_i := \bigcup _{i \in \mathbb N} c_i,
\]
i.e.\, the colouring that maps every $v \in \domain (c_i)$ to $c_i(v)$.

We say that an automorphism $\gamma\in \Aut(G)$ \emph{preserves} a partial colouring $c$ if $c(v) = c(\gamma v)$ whenever both colours are defined, and call the set of all $c$-preserving automorphisms the \emph{stabiliser} of $c$. Call a partial colouring of a multirooted graph \emph{$S$-distinguishing}, if every $c$-preserving automorphism must fix $S$ pointwise. Call it \emph{$S$-preserving}, if every $c$-preserving automorphism must fix $S$ setwise. As a special case of this, we call a $\domain(c)$-distinguishing colouring \emph{domain distinguishing}, and a $\domain(c)$-preserving colouring \emph{domain preserving}. Finally, a $V$-distinguishing colouring is simply called \emph{distinguishing}. The \emph{distinguishing number} of $G$ is the least number of colours in a distinguishing colouring and denoted by $D(G)$.

The following two lemmas show how $S$-distinguishing colourings can be used to construct distinguishing limit colourings. 

\begin{lem}
\label{lem:extendcolouring}
Let $(G,R)$ be a multirooted graph. Assume that we have compatible colourings $c$ and $c'$, where $c$ is an $S$-distinguishing colouring of $(G,R)$, and $c'$ is an $S'$-distinguishing colouring of $(G,R \cup S)$. Then $c \cup c'$ is an $(S \cup S')$-distinguishing colouring of $(G,R)$.
\end{lem}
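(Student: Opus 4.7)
The goal is to show that every automorphism $\gamma \in \Aut(G,R)$ that preserves the combined colouring $c \cup c'$ must fix $S \cup S'$ pointwise. My plan is to apply the two distinguishing hypotheses in sequence, first using $c$ to pin down $S$, and then upgrading $\gamma$ to an automorphism of $(G, R \cup S)$ so that the $S'$-distinguishing property of $c'$ becomes applicable.

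First I would take such a $\gamma$ and observe that, since $c \cup c'$ extends $c$, the preservation condition for $c \cup c'$ immediately implies the preservation condition for $c$: whenever $v, \gamma v \in \domain(c)$, both lie in $\domain(c \cup c')$ and agree with $c$ there, so $c(v) = c(\gamma v)$. As $\gamma$ already fixes $R$ pointwise by virtue of being in $\Aut(G,R)$, it is a $c$-preserving automorphism of $(G,R)$, and the hypothesis that $c$ is $S$-distinguishing forces $\gamma$ to fix $S$ pointwise.

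At this stage $\gamma$ fixes $R \cup S$ pointwise, i.e.\ $\gamma \in \Aut(G, R \cup S)$. The same extension argument applied to $c'$ shows that $\gamma$ preserves $c'$, and hence it is a $c'$-preserving automorphism of $(G, R \cup S)$. Since $c'$ is $S'$-distinguishing on this multirooted graph, $\gamma$ must fix $S'$ pointwise as well. Combining, $\gamma$ fixes $S \cup S'$ pointwise, which is exactly the statement that $c \cup c'$ is $(S \cup S')$-distinguishing on $(G,R)$.

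There is no real obstacle here; the only point one must be careful about is checking that preservation of the union colouring really does restrict to preservation of each piece, which follows directly from the definition of $c \cup c'$ together with the fact that the preservation condition only requires equality on vertices where the colouring is defined.
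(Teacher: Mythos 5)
Your proof is correct and follows the same two-step argument as the paper: use $S$-distinguishing of $c$ to upgrade $\gamma$ to an automorphism of $(G,R\cup S)$, then invoke $S'$-distinguishing of $c'$. The only difference is that you spell out in more detail why a $(c\cup c')$-preserving automorphism preserves each of $c$ and $c'$, which the paper leaves implicit.
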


\begin{proof}
Let $\gamma$ be a $(c \cup c')$-preserving automorphism. If $\gamma$ moves a vertex in $S$, then this contradicts the fact that $c$ was $S$-distinguishing. Hence $\gamma$ fixes $R \cup S$ which means that $\gamma$ is an automorphism of the multirooted graph $(G, R \cup S)$. Since $c'$ is $S'$ distingiushing for $(G,R \cup S)$ this implies that $\gamma$ cannot move any vertex in $S'$, and hence $c \cup c'$ is $(S \cup S')$-distinguishing for $(G,R)$.
\end{proof}

\begin{lem}
\label{lem:limitcolouring}
Let $(G,R)$ be a multirooted graph. Let $(c_i)_{i \in \mathbb N}$ be an increasing sequence of partial colourings. Assume that $c_i$ is $S_i$-distinguishing in $(G,R)$ where the sets $S_i$ satisfy $\bigcup_{i \in \mathbb N}S_i = V$. Then $\lim _{i \to \infty} c_i$ is distinguishing.
\end{lem}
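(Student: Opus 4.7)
The plan is a straightforward unravelling of definitions, leveraging Lemma \ref{lem:extendcolouring} implicitly through its underlying idea. Let $c := \lim_{i \to \infty} c_i$ and suppose $\gamma \in \Aut(G,R)$ is a $c$-preserving automorphism. By definition of distinguishing (applied in the multirooted setting, so that $\gamma$ already fixes $R$ pointwise), it suffices to show that $\gamma$ fixes every vertex of $V$.

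The first step is to observe that $\gamma$ preserves each partial colouring $c_i$ individually. Indeed, if $v, \gamma v \in \domain(c_i)$, then since $c$ extends $c_i$ we have $c_i(v) = c(v)$ and $c_i(\gamma v) = c(\gamma v)$; the equality $c(v) = c(\gamma v)$ (which holds because $\gamma$ preserves $c$) then gives $c_i(v) = c_i(\gamma v)$.

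Next, given any vertex $v \in V$, the hypothesis $\bigcup_{i \in \mathbb N} S_i = V$ gives an index $i$ with $v \in S_i$. Since $c_i$ is $S_i$-distinguishing in $(G,R)$ and $\gamma$ preserves $c_i$, by definition $\gamma$ fixes $S_i$ pointwise, so in particular $\gamma v = v$. As $v$ was arbitrary, $\gamma = \id$ and $c$ is distinguishing.

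There is essentially no obstacle here; the whole statement is a routine definition chase, and the only care needed is the verification in the first step that preservation of the limit colouring descends to preservation of each $c_i$.
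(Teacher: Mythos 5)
Your proof is correct and takes essentially the same approach as the paper's: show that a $c$-preserving automorphism of $(G,R)$ preserves each $c_i$, then invoke $S_i$-distinguishing and $\bigcup_i S_i = V$. You merely spell out the verification that preservation of the limit colouring descends to each $c_i$ (which the paper asserts without detail), and the aside about Lemma \ref{lem:extendcolouring} is unnecessary since you never actually use it.
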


\begin{proof}
Let $\gamma$ be an automorphism that preserves $\lim _{i \to \infty} c_i$. Then $\gamma$ preserves all partial colourings $c_i$. This implies that $\gamma$ fixes $S_i$ pointwise for every $i$, and since $V=\bigcup_{i \in \mathbb N}S_i$ we conclude that $\gamma = \id$.
\end{proof}

We say that a graph has \emph{infinite motion}, if every nontrivial automorphism moves infinitely many vertices. Conjecture \ref{con:infinitemotion} states that a locally finite graph with infinite motion has distinguishing number at most $2$. Results in \cite{imrich, Watkins} imply the following theorem which says that the conjecture is true for trees.

\begin{thm}
\label{thm:treelike}
Let $G$ be a locally finite tree with infinite motion, then $D(G)\leq 2$. In particular the distinguishing number of a locally finite tree without leaves is at most $2$.
\end{thm}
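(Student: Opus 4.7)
I plan to reduce the ``in particular'' clause to the main claim and then construct a distinguishing 2-colouring via the inductive framework of Lemmas \ref{lem:extendcolouring} and \ref{lem:limitcolouring}.

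For the reduction, I would invoke the classical dichotomy that every automorphism of a tree either fixes a vertex or inverts an edge. In a leafless locally finite tree a non-trivial vertex-fixing automorphism acts non-trivially on some infinite branch rooted at a neighbour of the fixed vertex, while an edge-inverting automorphism swaps two infinite half-trees. Hence infinite motion holds, and the ``in particular'' clause follows from the main claim.

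For the main claim, enumerate $V(T) = \{v_0, v_1, \dots\}$ and build an increasing sequence of partial 2-colourings $c_0 \subseteq c_1 \subseteq \cdots$ such that $c_i$ is $S_i$-distinguishing for $(T,\emptyset)$ with $v_i \in S_i$. By Lemma \ref{lem:limitcolouring}, the limit is then a distinguishing 2-colouring. By Lemma \ref{lem:extendcolouring}, the inductive step reduces to finding, for the multirooted tree $(T, S_i)$ (which inherits infinite motion from $T$), a partial 2-colouring $c'$ compatible with $c_i$ which is $\{v_{i+1}\}$-distinguishing.

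The main obstacle is producing this single-vertex-distinguishing extension. The key observation is that once $S_i$ is non-empty, the orbit of $v_{i+1}$ under $\Aut(T, S_i)$ is finite: any automorphism fixing $S_i$ preserves distances to the roots in $S_i$, and only finitely many vertices sit at any given distance in a locally finite tree. It then suffices to break finitely many candidate automorphisms $\gamma_1,\dots,\gamma_m$ sending $v_{i+1}$ to other orbit members; for each $\gamma_j$, infinite motion supplies an abundance of moved-vertex witnesses outside $\domain(c_i)$ on which to place distinct colour bits, iteratively shrinking the preserving subgroup until $v_{i+1}$ is fixed. The delicate base case $i = 0$, where $S_0 = \emptyset$ and the $\Aut(T)$-orbit of $v_0$ may be infinite (so that a single finite colouring cannot in general block ``wandering'' automorphisms), has to be handled separately by first installing a (possibly infinite) initial 2-colouring scaffold that puts at least one vertex into $S_0$; this scaffolding step is where the structural work of \cite{imrich, Watkins} actually takes place, exploiting the tree structure to rigidify $\Aut(T)$ using a globally defined 2-colouring before the iterative single-vertex argument takes over.
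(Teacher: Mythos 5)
The paper does not actually prove this theorem; it is cited from \cite{imrich, Watkins}, so there is no internal argument to compare against, and the proposal has to stand on its own. Your reduction of the ``in particular'' clause is correct: a non-identity automorphism of a tree either fixes a vertex or inverts an edge; in the first case, taking a moved vertex $u$ at minimal distance from the fixed vertex, the neighbour of $u$ towards the fixed vertex is fixed and $\gamma$ maps the (infinite, by leaflessness) component of $T$ minus that neighbour containing $u$ onto a disjoint component; in the second case the two infinite half-trees are swapped. So leafless locally finite trees have infinite motion.

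The inductive step, however, has a genuine gap. Finiteness of the $\Aut(T,S_i)$-orbit of $v_{i+1}$ once $S_i\neq\emptyset$ is correct, but the task is not to break ``finitely many candidate automorphisms $\gamma_1,\dots,\gamma_m$'': for each orbit element $w\neq v_{i+1}$ there may be uncountably many automorphisms of $(T,S_i)$ carrying $v_{i+1}$ to $w$, and placing a colour bit to kill one representative leaves the rest of the coset untouched. The ``iteratively shrinking the preserving subgroup'' scheme is not shown to terminate, and cannot be made to terminate by infinite motion alone: the surviving bad automorphisms form a decreasing chain of non-empty closed subsets of the compact group $\Aut(T,S_i)$, whose intersection is non-empty, so a bad automorphism can survive any sequence of bits placed one at a time. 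This is precisely the obstacle that makes Conjecture \ref{con:infinitemotion} hard in general. For trees the step is salvageable, but only by using the branching structure: any $\gamma$ with $\gamma(v_{i+1})=w$ fixes the meet $m$ of $v_{i+1}$ and $w$ relative to a root in $S_i$ and hence swaps two children of $m$, and one should break the whole coset at once by encoding distinct finite colour patterns into the subtrees hanging from those children, after arguing those subtrees are large enough to carry the encoding --- none of which is in your sketch. You also explicitly concede the base case to \cite{imrich, Watkins}. As written, the proposal is a reasonable framing of the problem but not a self-contained proof.
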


\section{Proof of the main result}

Before we prove our main theorem, we introduce a few concepts used in the proof and prove some basic facts about them.

Let $c$ be a domain preserving colouring of a multirooted graph $(G,R)$. Note that the stabiliser of a domain preserving colouring forms a group (this is not true in general for the stabiliser of a partial colouring). Call a vertex $v \in V \setminus R$ \emph{charted}, if it is coloured or it has a neighbour in $R$, and call $v$ \emph{uncharted otherwise}. A \emph{moving tuple (singleton, pair, triple, quadruple,\dots)} is an orbit of a charted vertex under the stabiliser of $c$. Since $c$ is domain preserving, all vertices in a moving tuple must be charted.

Note that all members of a moving tuple must have the same neighbours in $R$. Outside of $R$ there may be vertices that are adjacent to some, but not all members of a moving tuple. We call such vertices \emph{uncommon neighbours} of the tuple. Uncommon neighbours can synchronise the action on different moving tuples as the following lemma demonstrates.

\begin{lem}
\label{lem:synchronise}
Let $c$ be a domain preserving colouring, and let $A$ and $B$ be moving tuples with at most $3$ elements. If $A$ has an uncommon neighbour in $B$, then $B$ has an uncommon neighbour in $A$ and there is a bijection $f\colon A \to B$ such for every automorphism $\gamma$ in the stabiliser of $S$ we have $\gamma \mid_B = f \circ \gamma \mid _A \circ f^{-1}$. In particular, every such $\gamma$ that fixes $v \in A$ must also fix $f(v) \in B$ and vice versa.
\end{lem}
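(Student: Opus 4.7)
Let $H$ denote the stabiliser of $c$; this is a group since $c$ is domain preserving, so both $A$ and $B$ are transitive $H$-sets. The plan is to define a ``neighbourhood in $A$'' map $\nu \colon B \to 2^A$ by $\nu(b') := N(b') \cap A$. Since automorphisms preserve adjacency, $\nu$ is $H$-equivariant, that is, $\nu(\gamma b') = \gamma \nu(b')$ for every $\gamma \in H$. By hypothesis $\nu(b)$ is a proper non-empty subset of $A$; because $B$ is a single $H$-orbit, the same holds for every $b' \in B$, and all values of $\nu$ share a common cardinality $k$ with $1 \leq k \leq |A|-1$.

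The first main step is to show that $|A| = |B|$ and that $\nu$ is a bijection. Using transitivity of $H$ on $A$, the $H$-orbit of $\nu(b)$ in $2^A$ turns out to be the full set of $k$-subsets of $A$. This is where the hypothesis $|A| \leq 3$ enters: the only transitive subgroups of $\mathrm{Sym}(A)$ in this range are $S_2$, $C_3$ and $S_3$, and each of them acts transitively on $k$-subsets for every $k$. Consequently $|\nu(B)| = \binom{|A|}{k}$, which equals $|A|$ since $k \in \{1,|A|-1\}$. Transitivity of $H$ on $B$ forces all fibres of $\nu$ to have equal size, so $|A|$ divides $|B|$. Neither tuple can be a singleton (a singleton $A$ admits no uncommon neighbours at all, while a singleton $B$ would force $\nu(b)$ to be $H$-invariant in a transitive $H$-set, contradicting properness), so the bound $|A|,|B| \leq 3$ now leaves only $|A|=|B|$, and the fibres of $\nu$ are singletons.

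With $\nu$ a bijection between $B$ and the $k$-subsets of $A$, I define $f \colon A \to B$ by letting $f(a)$ be the unique $b' \in B$ with $\nu(b') = \{a\}$ (if $k = 1$) or $\nu(b') = A \setminus \{a\}$ (if $k = |A|-1$). Then $f$ is a well-defined bijection, and $H$-equivariance of $\nu$ gives $\nu(\gamma f(a)) = \gamma \nu(f(a)) = \nu(f(\gamma a))$, whence $\gamma f(a) = f(\gamma a)$, which rearranges to $\gamma|_B = f \circ \gamma|_A \circ f^{-1}$ and immediately yields the final sentence of the lemma. For the symmetric statement, pick any two $b_1,b_2 \in B$ with $\nu(b_1) \neq \nu(b_2)$ and any $a \in \nu(b_1) \setminus \nu(b_2)$ (non-empty since both sides have the same cardinality $k$); then $a \in A$ is adjacent to $b_1$ but not to $b_2$, so $a$ is an uncommon neighbour of $B$ in $A$.

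The single delicate point, and the expected main obstacle, is the equality $|A| = |B|$: it rests entirely on every transitive subgroup of $\mathrm{Sym}(A)$ being transitive on $k$-subsets for $|A| \leq 3$. This already fails at $|A| = 4$, since the Klein four-group acts transitively on $\{1,2,3,4\}$ but has three orbits on $2$-subsets. This is consistent with the authors' remark that Lemma \ref{lem:synchronise} is the lone obstruction to pushing Theorem \ref{thm:five} past $\Delta = 5$.
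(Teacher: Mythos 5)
Your proof is correct, and it takes a genuinely different route from the paper. The paper's argument is purely combinatorial: since the stabiliser acts transitively on both $A$ and $B$, the bipartite graph between them is biregular, and a short enumeration of biregular bipartite graphs with parts of size at most $3$ shows it is either complete bipartite, a perfect matching, or a $6$-cycle; $f$ is then taken to be the matching partner or the antipodal map. Your proof instead works with the $H$-equivariant map $\nu \colon b' \mapsto N(b') \cap A$, shows via orbit counting that its image is exactly the set of $k$-subsets of $A$ and that it is a bijection, and reads $f$ off from $\nu$. The two arguments buy slightly different things. The paper's is shorter and entirely elementary. Yours isolates the exact group-theoretic fact being exploited, namely that every transitive subgroup of $\mathrm{Sym}(n)$ for $n \le 3$ acts transitively on $k$-subsets for all $k$, and it immediately exhibits the Klein four-group as the obstruction at $n = 4$; this makes the remark after the lemma about the bottleneck at $\Delta > 5$ transparent in a way the paper's enumeration does not. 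Two very small expository points: when $|A| = 2$ the two clauses in your definition of $f$ (the case $k=1$ and the case $k = |A|-1$) both apply and give different maps, so you should say which you take (either works, but as written the definition is formally ambiguous); and ``$k \in \{1, |A|-1\}$'' is a consequence of $|A|\le 3$ rather than an independent restriction, which is worth spelling out. Neither affects correctness.
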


\begin{proof}
First note that by the definition of moving tuples the stabiliser of $S$ fixes $A$ and $B$ setwise but acts transitively on the elements of $A$ and $B$ respectively. Hence every element of $A$ has the same number of neighbours in $B$ and vice versa. Since $A$ and $B$ both contain at most $3$ vertices, this can only work if the graph induced by the edges between $A$ and $B$ is either complete bipartite, or a matching, or a $6$-cycle. In the first case $B$ cannot contain an uncommon neighbour of $A$, in the second case we take $f$ to be the map that takes each vertex to the other endpoint of its matching edge, in the last case let $f$ be the map taking each vertex to the antipodal vertex on the cycle.
\end{proof}

\begin{rmk}
Note that the above lemma is no longer true if we allow moving quadruples. In this case it is possible that we have two disjoint copies of $K_{2,2}$ between two moving quadruples or two disjoint copies of $K_{2,1}$ between a moving pair and a moving quadruple. This is also why our proof does not easily carry over to higher maximum degrees.
\end{rmk}

We call two moving pairs or triples $A$ and $B$ synchronised and write $A \sync B$, if there is a finite sequence $A = A_1, A_2, \dots, A_k = B$ of moving pairs or triples such that $A_i$ has a uncommon neighbour in $A_{i+1}$ for $1 \leq i \leq k-1$. By the above lemma, being synchronised is an equivalence relation. Furthermore, between any two synchronised moving pairs or triples we can find a bijection $f$ such that an automorphism in the stabiliser of $S$ fixes $x \in A$ if and only if it fixes $f(x) \in B$.

Let $c$ be a partial colouring of a multirooted graph $(G,R)$ and let $v$ be a vertex in the domain of $c$. The \emph{monochromatic component} of $v$ is the set of all vertices that can be reached from $v$ by a monochromatic path (i.e.\ no vertices of the opposite colour, but also no uncoloured vertices). We say that a monochromatic component $K$ is \emph{healthy}, if one of the following holds:
\begin{itemize}
\item the vertices in $K$ do not have colour $0$,
\item $K \cap R \neq \emptyset$,
\item $|K| < \infty$ and $K$ has no uncoloured neighbours, or
\item there is a vertex in $K$ with at least $4$ neighbours in $K$.
\end{itemize}
If a monochromatic component is not healthy, we call it \emph{unhealthy}. Call the colouring $c$ \emph{healthy}, if all monochromatic components under $c$ are healthy and \emph{unhealthy} otherwise. A \emph{symptom} of a colouring $c$ is a vertex in an unhealthy component with uncoloured neighbours outside of $R$, or an unhealthy infinite monochromatic component. Denote by $\symptoms (c)$ the set of symptoms of a colouring $c$. Note that a  colouring $c$ is healthy if and only if it has no symptoms.

\begin{lem}
\label{lem:healthyextend}
Let $c$ be a healthy colouring and let $c'$ be a colouring extending $c$ such that $\domain (c') \setminus \domain (c)$ is finite and contains no symptoms of $c'$. Then $c'$ is healthy. 
\end{lem}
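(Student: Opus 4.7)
The plan is proof by contradiction. Assume $c'$ has an unhealthy monochromatic component $K'$; unfolding the definitions, $K'$ has colour $0$, misses $R$, has no vertex with four or more neighbours inside $K'$, and is either infinite or has an uncoloured $c'$-neighbour outside $R$. I split according to whether $K'$ contains any vertex of $\domain(c)$ or lies entirely in $D := \domain(c') \setminus \domain(c)$.

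In the case $K' \cap \domain(c) \neq \emptyset$, pick $u$ in the intersection and consider its $c$-monochromatic component $K_u$. Since any $c$-monochromatic path is also a $c'$-monochromatic path, $K_u \subseteq K'$. Then $K_u$ inherits from $K'$ its colour $0$, its disjointness from $R$, and the absence of vertices with four or more neighbours in $K_u$, so healthiness of $c$ forces $K_u$ to be finite with no uncoloured $c$-neighbour outside $R$. The central step is to show $K_u = K'$: if $v \in K' \setminus K_u$ were adjacent to some vertex of $K_u$, then $v \in V \setminus R$, and either $v \in D$, making $v$ an uncoloured $c$-neighbour of $K_u$ outside $R$, or $v \in \domain(c)$ with $c(v) = 0$, contradicting the maximality of $K_u$. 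Connectedness of $K'$ then gives $K_u = K'$. Since passing from $c$ to $c'$ only shrinks the set of uncoloured vertices, $K'$ remains finite with no uncoloured $c'$-neighbours outside $R$, and so is healthy under $c'$, contradicting the assumption.

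In the remaining case $K' \subseteq D$, finiteness of $D$ forces $K'$ to be finite, so unhealthiness yields an uncoloured $c'$-neighbour $w$ of $K'$ outside $R$. The vertex $u \in K'$ adjacent to $w$ sits in an unhealthy component and has an uncoloured neighbour outside $R$, so $u$ is a symptom of $c'$; but $u \in K' \subseteq D$, contradicting the hypothesis that $D$ contains no symptoms. The only non-routine step is the identification $K_u = K'$ in the first case; once one knows an unhealthy $K'$ cannot mix old vertices with vertices from $D$, the rest is bookkeeping with the single observation that extending a partial colouring never creates new uncoloured vertices.
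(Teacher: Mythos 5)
Your proof is correct. The overall strategy---contradiction, using the fact that $c$-monochromatic components sit inside $c'$-monochromatic components---is the same as in the paper, but your case split is different. The paper fixes a symptom $A$ of $c'$, uses the hypothesis on $D$ (together with finiteness of $D$, when $A$ is an infinite component) to conclude $A\cap\domain(c)\neq\emptyset$, and then exhibits a symptom of $c$; for the infinite-component case this tacitly relies on the fact that deleting the finite set $A\cap D$ from the infinite connected locally finite subgraph $A$ leaves at least one infinite component, and hence an infinite $c$-component inside $A$. You instead split on whether the unhealthy $c'$-component $K'$ meets $\domain(c)$, and your identification $K_u=K'$ is a cleaner structural observation: the healthiness of $c$ alone seals off $K_u$ (no uncoloured $c$-neighbours and no same-coloured coloured ones), forcing $K'=K_u$ to be a finite, neighbourless $c$-component and therefore healthy, so the ``no symptoms in $D$'' hypothesis is not needed at all in that case; it is invoked only when $K'\subseteq D$. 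This makes your proof a bit longer but more transparent about where each hypothesis is doing work, and it sidesteps the paper's implicit infinite-leftover-component step.

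Two small slips worth noting, neither of which affects correctness: the third healthiness condition reads ``no uncoloured neighbours'' rather than ``no uncoloured neighbours outside $R$'' (you get a slightly stronger conclusion for $K_u$ than you claim, which only helps you), and it is worth saying explicitly that $v\in K'\subseteq\domain(c')$ is what rules out the third alternative $v\notin\domain(c')$ in your dichotomy ``$v\in D$ or $v\in\domain(c)$''.
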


\begin{proof}
If $c'$ is unhealthy, then there is a symptom $A$ of $c'$. By assumption, $A \cap \domain(c) \neq \emptyset$. If $A$ is a vertex with uncoloured neighbours, then $A$ is a symptom of $c$. Otherwise there is an infinite monochromatic component of $c$ contained in $A$, which is a symptom of $c$. In both cases we obtain a contradiction to the assumption that $c$ is healthy.
\end{proof}

\begin{lem}
\label{lem:healthylimit}
The limit colouring of any increasing sequence $(c_i)_{i \in \mathbb N}$ of healthy partial colourings is healthy.
\end{lem}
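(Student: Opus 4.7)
The plan is to argue by contradiction: I would assume that the limit colouring $c := \lim_{i\to\infty} c_i$ has an unhealthy monochromatic component $K$, and then locate an unhealthy component already in some $c_i$. Since $K$ is unhealthy, all of its vertices have colour $0$, $K \cap R = \emptyset$, and no vertex in $K$ has $4$ neighbours in $K$. I would split according to whether $K$ is finite or infinite.

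In the finite case, the only remaining healthy condition available to $K$ is the third, so its failure forces $K$ to have an uncoloured neighbour $w$ in $c$, and any such $w$ is outside $\domain(c_i)$ for every $i$. Choosing $i$ large enough that $K \subseteq \domain(c_i)$, the colours on $K$ agree in $c$ and $c_i$, so the monochromatic component of any $v \in K$ in $c_i$ is again $K$; all four healthy conditions still fail in $c_i$, since $w$ remains uncoloured there, and this contradicts healthiness of $c_i$.

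In the infinite case, fix $v \in K$ and, for each $i$ with $v \in \domain(c_i)$, let $K_i$ denote the monochromatic component of $v$ in $c_i$. The sets $K_i$ form a nested increasing sequence inside $K$ whose union is $K$, since any monochromatic path in $c$ lies entirely in some $c_i$. Each $K_i$ inherits from $K$ the properties of having colour $0$, being disjoint from $R$, and containing no vertex of $4$ neighbours in $K_i$, so healthiness of $c_i$ forces $K_i$ to be finite with no uncoloured neighbours in $c_i$. The delicate step, which I expect to be the main obstacle, is to deduce that these two properties alone imply $K_{i+1} = K_i$: any vertex outside $K_i$ adjacent to $K_i$ must carry a colour different from $0$ in $c_i$ (it is not uncoloured by assumption, and not of colour $0$ by maximality of $K_i$), and this colour is preserved in $c_{i+1}$, so no monochromatic path in $c_{i+1}$ starting at $v$ can leave $K_i$. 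Iterating yields $K = \bigcup_i K_i = K_{i_0}$ for some $i_0$, which is finite and contradicts the case hypothesis.
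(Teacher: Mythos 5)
Your proof is correct and rests on the same key observation as the paper's: a healthy $c_i$ forces the colour-$0$ component under consideration to be finite with every neighbour already coloured $\neq 0$, which seals it off in all later colourings. The paper avoids your finite/infinite case split by taking any $i$ with $K \cap \domain(c_i) \neq \emptyset$ and arguing directly that $K = K \cap \domain(c_i)$, but the substance of the argument is identical.
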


\begin{proof}
Assume that there is an unhealthy component $K$ in the limit colouring. Let $i \in \mathbb N$ such that $K_i := K \cap \domain(c_i) \neq \emptyset$. Since $K$ is unhealthy in $c$, all vertices of $K_i$ are coloured with colour $0$, $K_i \cap R = \emptyset$, and no vertex of $K_i$ has $4$ or more neighbours in $K_i$. The colouring $c_i$ is healthy, so $K_i$ is finite and all neighbours of $K_i$ except those in $R$ have already been coloured (with colours different from $0$). It follows that $K = K_i$, and thus $K$ is healthy, a contradiction.
\end{proof}

We are now ready to prove Theorem \ref{thm:five}. Before doing so, let us briefly recall its statement.

\begingroup
\def\thethm{\ref{thm:five}}
\begin{thm}
Let $G$ be a connected graph with maximum degree $\Delta \leq 5$ in which every non-trivial automorphism moves infinitely many vertices, then $D(G) \leq 2$.
\end{thm}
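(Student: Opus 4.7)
The strategy is to construct the distinguishing $2$-colouring as a limit $\lim_{n \to \infty} c_n$ of an increasing sequence $(c_n)_{n \in \mathbb{N}}$ of healthy partial colourings of $G$, chosen so that each $c_n$ is $S_n$-distinguishing for a nested sequence of sets $S_n \subseteq V$ with $\bigcup_n S_n = V$. Lemma~\ref{lem:limitcolouring} then guarantees that the limit is distinguishing, while Lemma~\ref{lem:healthylimit} keeps the construction in the healthy regime throughout. Fix an enumeration $v_0, v_1, \ldots$ of $V(G)$; at stage $n$ we aim to include $v_n$ in $S_{n+1}$. Invoking Lemma~\ref{lem:extendcolouring} with the multirooted graph $(G, S_n)$, this reduces to the following technical step: for every healthy domain-preserving colouring $c$ of an arbitrary multirooted graph $(G, R)$ and every vertex $v \in V \setminus R$, find a healthy finite extension $c'$ of $c$ under which every $c'$-preserving automorphism of $(G, R)$ fixes $v$.

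To carry out this step, we work locally around $v$. First, along a short path from some already-fixed vertex to $v$, colour enough vertices with colour~$1$ to make $v$ charted and, crucially, to guarantee that $v$ has a stabiliser-fixed neighbour $u$; this uses only colour~$1$, so healthiness is preserved by the first bullet of the healthy definition. The orbit of $v$ under the new stabiliser, i.e.\ its moving tuple $A$, is then contained in $N(u)$, giving $|A| \leq \deg(u) \leq \Delta \leq 5$. It now suffices to shrink $|A|$ to $1$ by a bounded number of local surgeries. For $|A| \in \{2, 3\}$, Lemma~\ref{lem:synchronise} supplies a bijection between $A$ and every synchronised moving pair or triple: locate an uncommon neighbour of two elements of $A$, colour it with~$0$, and add a small patch of colour~$1$ around it so that the resulting colour-$0$ monochromatic component is either finite and enclosed by colour~$1$ (third bullet of healthy) or contains a vertex with at least four component-internal neighbours (fourth bullet). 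Lemma~\ref{lem:healthyextend} then certifies the extension as healthy, and synchronisation propagates the symmetry-breaking coherently across the entire synchronisation class.

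The principal obstacle, explicitly flagged in the Remark after Lemma~\ref{lem:synchronise}, is the case $|A| \in \{4, 5\}$. Here synchronisation may fail because configurations like disjoint copies of $K_{2,2}$ or $K_{2,1}$ between tuples can arise, and so a local surgery on $A$ cannot in general be propagated by a single bijection. The plan is to enumerate directly the possible bipartite structures between $A$ and its uncommon neighbours---the bound $\Delta \leq 5$ keeps this list of cases finite---and then to use infinite motion to rule out all configurations admitting automorphisms that move only finitely many vertices. In each remaining case one exhibits an ad hoc local surgery with colour~$0$ that breaks $A$ and preserves healthiness, again via Lemma~\ref{lem:healthyextend}. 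It is exactly this finite case analysis that fails to close for $\Delta \geq 6$, where moving sextuples allow much richer bipartite patterns (for instance disjoint copies of $K_{3,3}$ between tuples) beyond the reach of current local techniques, which is precisely why our theorem stops at $\Delta = 5$.
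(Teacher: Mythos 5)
Your high-level scaffolding (limit of healthy partial colourings, Lemmas~\ref{lem:extendcolouring}, \ref{lem:limitcolouring}, \ref{lem:healthylimit}, local surgeries via Lemma~\ref{lem:synchronise}) matches the paper's framework, and your reduction to ``extend a healthy colouring so that $v$ is fixed'' is the right shape. But there are two genuine gaps, one of which is the crux of the whole argument.

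First and most importantly: your plan for $|A|\in\{4,5\}$ is not what makes the theorem work, and as described it does not work at all. You propose to ``enumerate directly the possible bipartite structures between $A$ and its uncommon neighbours'' and then ``use infinite motion to rule out all configurations admitting automorphisms that move only finitely many vertices.'' Infinite motion does not rule out local configurations; it is a global property of $\Aut(G)$, and a bad local configuration can perfectly well exist in a graph with infinite motion (the symmetry it creates might propagate to infinity). Moreover, once moving quadruples exist, the failure of Lemma~\ref{lem:synchronise} means a local surgery on $A$ need not propagate coherently, and your enumeration gives no mechanism to control the downstream effect on health and on $S$-distinguishing. The paper's actual resolution is completely different: it maintains as an inductive invariant that no moving $k$-tuple with $k\ge 4$ ever occurs among charted vertices. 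This is achieved by choosing the root set carefully ($R=P+C-s$ for an induced cycle $C$ and a geodesic ray $P$), by always picking the next vertex $x$ as a vertex closest to $C$ not yet in $S_i$ (so $x$ is automatically charted and has enough charted neighbours to cap the tuple size), and by a colouring procedure that explicitly keeps the invariant. With this invariant in place, Lemma~\ref{lem:synchronise} always applies and the case $|A|\ge 4$ simply never arises. Avoiding large tuples, rather than handling them, is the idea you are missing; without it your plan has no actual argument for the hard case.

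Second, your step ``colour enough vertices with colour $1$ along a short path to guarantee that $v$ has a stabiliser-fixed neighbour $u$'' is stated as if it were easy, but it is not: colouring a path monochromatically with colour $1$ does not force any vertex on it to be fixed, since an automorphism can slide the path or swap it with a sibling. The paper avoids this by the choice-of-$x$ rule above (making $x$ charted by construction) and by the claim, proved via infinite motion plus the geodesic property of $P$, that one can always locate a synchronised tuple $Y$ with either uncharted uncommon neighbours or a root neighbour in $R-C$ to anchor the surgery. Relatedly, your reduction to $(G,S_n)$ never explains how $R$ itself is distinguished in the final colouring; the paper needs the specific structure of the colour-$0$ component $K_R$ (a ray with pendant leaves, none on the first two vertices) to show $R$ is fixed, and this constrains the surgeries throughout. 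So while your outline captures the easy parts, the two load-bearing ideas --- the invariant forbidding moving $k$-tuples for $k\ge4$, and the synchronisation/geodesic argument that makes the inductive step possible --- are absent or replaced by steps that would not go through.
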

\addtocounter{thm}{-1}
\endgroup

\begin{proof} [Proof of Theorem \ref{thm:five}]
If $G$ is a tree, then we are done by Theorem \ref{thm:treelike}. We may thus assume that $G$ contains at least one cycle. Let $C$ be an induced cycle, let $P$ a geodesic ray which meets $C$  only at its starting point, and let $s$ be a neighbour of the starting point of $P$ on $C$. Define $R = P+C-s$.

We will now inductively define sets $S_i$ with $\bigcup_{i \in \mathbb N} S_i = V$ and an increasing sequence $c_i$ of partial colourings such that for every $i \in \mathbb N$
\begin{enumerate}[label = (C\arabic*)]
\item \label{itm:sidistinguishing}
$c_i$ is $S_i$-distinguishing in $(G,R)$,
\item \label{itm:neighbourofroot}
$R$ is contained in a monochromatic component $K_R$ of colour $0$ such that
\begin{itemize}
\item each $r \in R$ has at most one neighbour in $K_R - R$, and
\item each $v \in K_R - R$ has exactly one neighbour $r$ in $K_R$ which lies in $R-C$, neither $v$ nor $r$ have uncoloured neighbours,
\end{itemize}
\item \label{itm:healthy}
$c_i$ is healthy.
\end{enumerate}

Before we construct these colourings, we show that their limit colouring will be distinguishing.

First note that by Lemma \ref{lem:limitcolouring} and \ref{itm:sidistinguishing}, the limit colouring $c = \lim_{i \to \infty} c_i$ will be distinguishing for the multirooted graph $(G,R)$. So we only need to show that every colour preserving automorphism fixes $R$ pointwise. 

It is easy to see that property \ref{itm:neighbourofroot} carries over to the limit colouring. Hence the monochromatic component $K_R$ with respect to $c$ is a ray with leaves attached to it. There is no leaf attached to the first two elements of the ray (because they are in $C$). Hence any automorphism which fixes $K_R$ setwise must fix $R$ pointwise, and we only need to show that any colour preserving automorphism has to fix $K_R$ setwise. 

To this end, observe that by Lemma \ref{lem:healthylimit} and \ref{itm:healthy}, the limit colouring is healthy. In particular, $K_R$ is the only monochromatic component of colour $0$ which does not contain a vertex of degree at least $4$. Hence $K_R$ must be fixed setwise by every colour preserving automorphism.

In order to recursively construct the colourings $c_i$ we make a few more assertions which will be true for every $i \in \mathbb N$

\begin{enumerate}[resume,label = (C\arabic*)]
\item \label{itm:fixcoloured}
$c_i$ is domain preserving in $(G,R)$,
\item \label{itm:domfinite}
$\domain (c_i) \setminus R$ is finite, 
\item \label{itm:domneighbours}
$\domain(c_i)$ contains all neighbours of $S_i$,
\item \label{itm:noktuples}
there are no moving $k$-tuples for $k \geq 4$ in $\domain(c_i)$.
\end{enumerate}

Let $S_1 = R$, and let $c_1$ be the colouring that assigns $0$ to all vertices in $R$ and leaves the remaining vertices uncoloured. Properties \ref{itm:sidistinguishing} to \ref{itm:domneighbours} are trivially satisfied. To see that \ref{itm:noktuples} also holds, note that the vertices of $R$ form a ray. Every internal vertex of this ray has two neighbours in $R$, and thus at most $3$ neighbours outside of $R$. The starting point of $R$ may have $4$ neighbours outside of $R$, but at least one of them is also a neighbour of an internal vertex of $R$ and hence contained in a moving $k$-tuple for $k \leq 3$.

Now assume that we already defined $S_i$ and $c_i$. Let $x$ be a vertex at minimal distance to $C$ which is not contained in $S_i$. Note that $x$ is either a neighbour of $S_i$ or a neighbour of $R$. Since by \ref{itm:domneighbours} all neighbours of $S_i$ are coloured, we conclude that $x$ is charted. Let $X$ be the moving tuple containing $x$.

We claim that if $X$ is not a singleton, then one of the following holds:
\begin{enumerate}
\item There is an uncoloured  $Y \sync X$ whose unique coloured neighbour lies in $R - C$.
\item There is $Y \sync X$ which has uncharted uncommon neighbours.
\end{enumerate}

 Let $\mathcal Y$ be the set of tuples that are synchronised with $X$. If no tuple in $\mathcal Y$ has an uncharted uncommon neighbour, then there is an automorphism that moves only vertices contained in $\mathcal Y$ and fixes all other vertices. Since $G$ has infinite motion, this implies that $\mathcal Y$ must be infinite. By \ref{itm:domfinite}, $\domain(c_i)\setminus R$ is finite,  so infinitely many elements of $\mathcal Y$ must be neighbours of $R$.
 
Note that every internal vertex of $R$ has neighbours in at most one moving tuple in $\mathcal Y$, otherwise its degree would be larger than $5$ since it already has two neighbours in $R$. It's easy to see that there are  tuples $Y_1, Y_2, Y_3\in \mathcal Y$ whose only coloured neighbours lie on the geodesic ray $P \subseteq R$  such that $Y_i$ has uncommon neighbours in $Y_{i+1}$ for $i \in \{1,2\}$. Since each $Y_i$ has $2$ or more neighbours in $R$, we conclude that at least two of those neighbours, $v$ and $w$, lie at distance $5$ or more from each other on $R$. 

Assume that $v$ is a neighbour of $Y_1$ and $w$ is a neighbour of $Y_3$. Then there is a path $v,y_1,y_2,y_3,w$, where $y_{i+1} \in Y_{i+1}$ is an uncommon neighbour of $y_i \in Y_i$ for $i \in \{1,2\}$. This path has length $4$, contradicting the fact that $P$ was geodesic. The cases where $v$ and $w$ are neighbours of other tuples in $Y_1,Y_2,Y_3$ are completely analogous and lead to even shorter paths. This finishes the proof of the claim.

Let $S_{i+1} = S_i \cup \{x\}$. We now define the colouring $c_{i+1}$ by first colouring $\domain(c_i)$ according to $c_i$, and then possibly colouring some yet uncoloured vertices. If $X$ is uncoloured (this can only happen if $x$ is a neighbour of $R$), then colour all elements of $X$ with $1$. Note that if $X = \{x\}$ is a moving singleton with no uncoloured neighbours, then \ref{itm:sidistinguishing} to \ref{itm:fixcoloured} are trivially satisfied, whence we found $c_{i+1}$.  

Otherwise distinguish cases according to the two possible options arising from the above claim. The case where $X = \{x\}$ is a singleton with uncoloured neighbours will be treated together with the case where there are uncharted uncommon neighbours of some $Y \sync X$.

First assume that there is an uncoloured $Y \sync X$ whose unique coloured neighbour $r$ lies in $R-C$. By Lemma \ref{lem:synchronise} there is an element $y \in Y$ such that any $c_i$-preserving automorphism that fixes $y$ must also fix $x$. Colour $y$ with $0$ and all other uncoloured neighbours of $r$ with $1$. Colour all uncoloured neighbours of $x$ and $y$ with colour $1$ and let $c_{i+1}$ be the resulting colouring. 

It remains to show that $c_{i+1}$ satisfies \ref{itm:sidistinguishing} to \ref{itm:noktuples}. For \ref{itm:sidistinguishing} note that any $c_{i+1}$-preserving automorphism also preserves $c_i$ and thus fixes $S_i$ pointwise and $Y$ setwise. Since $y$ is the only vertex with colour $0$ in $Y$, every $c_{i+1}$-preserving automorphism must fix $y$ and thus also $x$. If $r$ had any neighbours of colour $0$ in $c_i$, then $c_i$ would violate \ref{itm:neighbourofroot}. Hence the way we picked $y$ and the fact that we coloured all remaining neighbours of $y$ and $r$ with $1$ ensures that \ref{itm:neighbourofroot} holds for $c_{i+1}$. Property \ref{itm:healthy} holds by Lemma \ref{lem:healthyextend} and the fact that $y$ is the only vertex with colour $0$ in $\domain (c_{i+1}) \setminus \domain(c_i)$. For \ref{itm:fixcoloured}, note that $c_i$ was domain preserving, hence every vertex in $\domain(c_i)$ is mapped to a coloured vertex. Furthermore, every $c_{i+1}$-preserving automorphism must fix $x$, $y$, and $r$ and hence fixes their neighbourhoods setwise. Properties \ref{itm:domfinite} and \ref{itm:domneighbours} hold because we coloured finitely many vertices including the neighbourhood of $x$. For the proof of \ref{itm:noktuples} first note that when we picked $Y$, the tuple $X$ was already coloured. So $X \neq Y$ and consequently both $x$ and $y$ have at least two $c_i$-charted neighbours: one in a synchronised moving tuple, and one in $S_i$ or $R$ respectively. All other vertices in $\domain(c_{i+1}) \setminus \domain(c_i)$ are $c_i$ charted. Since $c_i$ satisfies \ref{itm:noktuples}, all charted vertices are contained in moving $k$-tuples for $k \leq 3$ and we conclude that $c_{i+1}$ satisfies \ref{itm:noktuples} as well.

Finally, consider the case when there is $Y \sync X$ which has uncharted uncommon neighbours. If $X$ has uncharted uncommon neighbours, we pick $Y = X$. Let $y \in Y$ be such that every $c_i$-preserving automorphism that fixes $y$ must also fix $x$. The argument below also applies in the case where $X$ is a singleton with uncoloured neighbours, in this case let $Y = X$ and $y = x$. 

If $X \neq Y$ and $X$ has uncoloured neighbours, then colour all of them with colour $1$. If $Y$ is uncoloured, then colour it with colour $1$. Let $\mathcal A = Y$ and iteratively run the following procedure.

Let $a \in \mathcal A$ (in the first step choose $a = y$) and let $A$ be the moving tuple containing $a$. Colour all neighbours of $A$ that are also neighbours of $R$ with colour $1$. If there are still uncoloured neighbours, we distinguish the following cases.
\begin{enumerate}
\item \label{itm:common} $A$ has no uncoloured uncommon neighbours (this includes the case where $|A| = 1$).
\begin{enumerate}[label=\arabic{enumi}\Alph*]
\item \label{itm:onetwothreecommon} If $A$ has $3$ or fewer uncoloured neighbours, do nothing.
\item \label{itm:fourcommon} If $A$ has $4$ uncoloured neighbours, then colour $3$ of them with $0$.
\end{enumerate}
\item \label{itm:uncommon} $A = \{a_1,a_2\}$ is a moving pair, or $A = \{a_1,a_2,a_3\}$ is a moving triple with uncoloured uncommon neighbours. Without loss of generality, $a_1 = a$. We say that an uncommon neighbour $v$ of $A$ is \emph{linked to $a_i$} if either $a_i$ is the unique neighbour of $v$ in $A$, or if $A$ is a moving triple and $a_i$ is the unique vertex in $A$ that is not connected to $v$.
\begin{enumerate}[label=\arabic{enumi}\Alph*]
\item \label{itm:onelinked} If there is a unique uncommon neighbour linked to $a_1$, then colour it with $0$.
\item \label{itm:twothreelinked} If there are $2$ or $3$ uncommon neighbours linked to $a_i$, then colour $(i-1)$ uncommon neighbours linked to $a_i$ with colour $0$.
\item \label{itm:fourlinked} If there are $4$ uncommon neighbours linked to $a_i$, then colour $(4-i)$ uncommon neighbours linked to $a_i$ with colour $0$.
\end{enumerate}
\end{enumerate}
Finally colour all remaining uncoloured neighbours of $A$ with colour $1$. If any of the newly coloured vertices are symptoms, then add them to $\mathcal A$. Remove all elements from $\mathcal A$ that have ceased to be symptoms. If $\mathcal A \neq \emptyset$, iterate.

We claim that after finitely many iterations this yields the desired colouring $c_{i+1}$. We first show that \ref{itm:sidistinguishing}, \ref{itm:neighbourofroot}, and \ref{itm:fixcoloured} to \ref{itm:noktuples} hold after every step of the recursion. Then we prove that the recursion eventually terminates with $\mathcal A = \emptyset$ and that this implies that the colouring is healthy.
 
We show inductively that all desired properties except \ref{itm:healthy} hold after every iteration. The induction basis for \ref{itm:neighbourofroot}, \ref{itm:fixcoloured}, and \ref{itm:domfinite} trivially follows from the corresponding properties of $c_i$.

For the induction basis for \ref{itm:sidistinguishing} first note that if $|X| = 1$, then this is trivially satisfied (even before the first iteration). To see that if $|X|>1$ it is satisfied after the first iteration, note that $Y$ has uncharted uncommon neighbours. Hence it still has uncoloured uncommon neighbours after colouring the neighbours of $R$ in its neighbourhood, i.e.\ we apply Case \ref{itm:uncommon}. In each of the subcases it is easy to see that the number of uncommon neighbours of $y = a_1$ with colour $0$ differs from the corresponding numbers for $a_2$ and $a_3$, whence $y$ (and thus also $x$) is fixed by every colour preserving automorphism. 

If $Y = X$, then property \ref{itm:domneighbours} will be satisfied after the first iteration where all neighbours of $Y$ are coloured, otherwise it follows from the colouring that was done before the first iteration. 

Property \ref{itm:noktuples} always holds before the first iteration. If $X=Y$, then this follows from \ref{itm:noktuples} for $c_i$. Otherwise $X$ has no uncharted uncommon neighbours, so all uncharted neighbours of $X$ are neighbours of $x$. Note that $x$ has at least one neighbour in $S_i$ and one neighbour in a synchronised moving tuple, and thus $X$ has at most $3$ uncharted neighbours. Since $c_i$ satisfies \ref{itm:noktuples}, every charted neighbour of $X$ is contained in moving $k$-tuples for some $k \leq 3$. Hence there are no moving $k$-tuples for $k \geq 4$ in the neighbourhood of $X$ and it follows that the colouring before the first iteration satisfies~\ref{itm:noktuples}.

For the induction step note that the colouring procedure preserves properties \ref{itm:sidistinguishing} since an extension of an $S$-distinguishing colouring is again $S$-distinguishing. It preserves \ref{itm:neighbourofroot} since throughout the procedure neighbours of root vertices are only coloured with colour $1$. It preserves \ref{itm:fixcoloured}, since the whole neighbourhood of an orbit is coloured in each recursion step. It preserves \ref{itm:domfinite}, since only finitely many vertices are coloured in each iteration. It trivially preserves \ref{itm:domneighbours}.

For the induction step for \ref{itm:noktuples}, note that in all cases except \ref{itm:onelinked} with a moving triple, every element of $A$ has a different number of neighbours with colour $0$. In particular, $A$ must be fixed pointwise by every colour preserving automorphism and the colouring on the neighbourhood of $A$ makes sure that there are no moving $k$-tuples for $k \geq 4$. If $A$ is a moving triple and we are in case \ref{itm:onelinked}, then $a_1$ is fixed by every colour preserving automorphism, and so is the unique previously uncoloured neighbour that is linked to $a_1$. The vertices $a_2$ and $a_3$ are either fixed by every colour preserving automorphism or they form a moving pair, and the same is true for the previously uncoloured vertices that are linked to them. Every remaining uncoloured neighbour of $A$ is not uncommon and hence connected to ervery vertex of $A$. Since $a_1$ had at least one coloured vertex before the iteration (otherwise it would not have been added to $\mathcal A$) we conclude that there are at most $3$ uncoloured neighbours of $\mathcal A$ left, and hence there is no moving $k$-tuple for $k \geq 4$.

Next we show that the recursion terminates. For this purpose, first note that after each iteration all neighbours of every vertex in $A$ are coloured. Hence the members of $A$ are removed from $\mathcal A$ and won't be added again in later iterations. Hence it suffices to show that the number of vertices that can possibly be added to $\mathcal A$ is finite.

To this end define the generation  $g(x)$ of an element $x \in \mathcal A$ as follows. The generation of any element of $Y$ is $1$. If $x$ has been added to $\mathcal A$ in an iteration where $g(a)=i$, then $g(x) = i+1$. Note that every vertex in generation $i+1$ has at least one neighbour in generation $i$ and thus lies at distance at most $i-1$ from $Y$. So it suffices to show that there is an upper bound on $g(x)$.

Every vertex added to $\mathcal A$ is a symptom and thus has colour $0$. Hence every vertex of generation $3$ and higher has a neighbour of colour $0$ of an earlier generation. Note that moving triples of colour $0$ can only be created in Subcases \ref{itm:fourcommon} and \ref{itm:fourlinked}. In case $a$ already has a neighbour of colour $0$ these triples will be contained in a healthy component and hence not be added to $\mathcal A$.  In particular, no $x \in \mathcal A$ with $g(x) \geq 4$ is contained in a moving triple.

Next note that if $A$ is a moving pair, then $A$ is fixed pointwise by every automorphism that preserves the colouring after the iteration. Hence the vertices of any moving tuple that is added to $\mathcal A$ this iteration must have a common neighbour in $A$. In particular, the members of a moving tuple of generation $5$ or higher lie in the same monochromatic component. If Subcase \ref{itm:fourlinked} is applied to such a tuple, no vertices are added to $\mathcal A$. Since all other cases only yield moving singletons of colour $0$, we conclude that every $x \in \mathcal A$ with $g(x) \geq 6$ is fixed pointwise by any colour preserving automorphism. Neither of the subcases of Case \ref{itm:common} yields new elements of $\mathcal A$ after generation $4$ and it follows that there are no vertices of generation $7$. 

Finally we need to show that the colouring $c_{i+1}$ obtained by running the recursion until $\mathcal A = \emptyset$ is healthy. Assume that there is a symptom $v \in \domain(c_{i+1}) \setminus \domain (c_i)$. Then $v$ would have been a symptom after the iteration in which it was coloured, and hence we would have added $v$ to $\mathcal A$. Since we ran the recursion until $\mathcal A = \emptyset$, there must have been an iteration in which $v$ was removed from $\mathcal A$ due to not being a symptom anymore. This implies that $v$ is not a symptom of any subsequent colouring and in particular $v$ is not a symptom of $c_{i+1}$. Since there are no symptoms in $\domain(c_{i+1}) \setminus \domain (c_i)$, we can use Lemma \ref{lem:healthyextend} and conclude that $c_{i+1}$ is healthy.
\end{proof}

It is obvious that by using more colours in the above colouring procedure we can avoid moving $k$-tuples for $k \geq 4$ even if the maximum degree is larger than $5$. Consequently, the proof can be adapted to show the following.

\begin{cor}
Let $G$ be a graph with maximum degree $\Delta \geq 3$ in which every non-trivial automorphism moves infinitely many vertices. Then $D(G) \leq \frac \Delta 3 + 1$.
\end{cor}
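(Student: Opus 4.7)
The plan is to repeat the proof of Theorem \ref{thm:five} almost verbatim, but with the colouring procedure upgraded to use $k := \lfloor \Delta/3 \rfloor + 1$ colours, still keeping colour $0$ reserved for the distinguished root component $K_R$ of property \ref{itm:neighbourofroot}. The observation that drives the bound is that every use of $\Delta \leq 5$ in the proof of Theorem \ref{thm:five} occurs in verifying invariant \ref{itm:noktuples} (absence of moving $k$-tuples with $k \geq 4$), which is in turn precisely what allows the synchronisation Lemma \ref{lem:synchronise} to apply. With more colours available, this invariant can be enforced directly by pigeonhole rather than through delicate case analysis.

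Concretely, whenever the recursive procedure colours some uncoloured neighbours of a vertex $v$ (or of a moving tuple $A$), we would choose among the colours $1, \ldots, k-1$ in such a way that no colour class contains more than $3$ of those neighbours. Since $v$ has at most $\Delta$ neighbours and $k - 1 \geq \lceil \Delta/3 \rceil$, such a distribution always exists. Because members of a common moving tuple must share colour and structural neighbourhood, this colour-splitting prevents moving $k$-tuples of size $\geq 4$ from arising. The remaining invariants \ref{itm:sidistinguishing}--\ref{itm:domneighbours} follow from essentially the same arguments as in the proof of Theorem \ref{thm:five}: the distinguishing colour patterns of Cases \ref{itm:common} and \ref{itm:uncommon} can be replicated using any two distinct non-zero colours in place of the original $0$ and $1$, and the root-component invariant \ref{itm:neighbourofroot} is maintained by continuing to reserve colour $0$ for $K_R$ and its designated neighbours.

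The main obstacle I anticipate is the termination of the inner recursion on the set $\mathcal A$ of symptoms. In the original proof this required a delicate generation argument (up to generation $6$) exploiting the fact that moving triples of colour $0$ can only be created in Subcases \ref{itm:fourcommon} and \ref{itm:fourlinked}. With the extra colours these subcases never need to be invoked (since we always have room to split four-vertex common or linked neighbour groups across distinct colour classes), so the recursion should terminate at least as quickly as before. Nevertheless, the generation bound and the healthiness of each $c_{i+1}$ via Lemma \ref{lem:healthyextend} must be carefully re-verified in the new setting, paying attention to the interaction between the extra non-zero colours and the symptom set.

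Once the increasing sequence $(c_i)_{i \in \mathbb N}$ is constructed, the argument that its limit colouring is distinguishing is unchanged from the proof of Theorem \ref{thm:five}: Lemma \ref{lem:limitcolouring} together with \ref{itm:sidistinguishing} shows the limit is $R$-distinguishing, while Lemma \ref{lem:healthylimit} combined with the unique ray-with-leaves structure of $K_R$ guaranteed by \ref{itm:neighbourofroot} forces every colour preserving automorphism to fix $R$ pointwise, hence to be the identity.
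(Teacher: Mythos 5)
Your high-level plan—use more colours so that no moving $k$-tuple with $k\geq 4$ ever arises, then run the machinery of Theorem~\ref{thm:five} verbatim—is exactly the intent behind the paper's one-sentence justification. But the specific execution you propose does not reach the claimed bound, for two related reasons.

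First, you misread the role of colour $0$ in the proof of Theorem~\ref{thm:five}. Colour $0$ is \emph{not} reserved for the root component $K_R$: Subcases~\ref{itm:fourcommon}, \ref{itm:onelinked}, \ref{itm:twothreelinked} and \ref{itm:fourlinked} all deliberately colour non-root vertices with $0$. What keeps $K_R$ identifiable is not that $0$ appears only on $K_R$, but the healthiness invariant~\ref{itm:healthy}: every other colour-$0$ monochromatic component is either finite with no uncoloured neighbours or contains a vertex of degree $\geq 4$ within the component, whereas $K_R$ is an infinite ray with pendants. By banning colour $0$ outside $K_R$ you make the healthiness machinery vacuous (which, as you notice, trivialises the termination argument for $\mathcal A$), but you also lose an entire colour class. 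A vertex $a$ being processed has at least one coloured neighbour, hence at most $\Delta-1$ uncoloured ones; splitting these into classes of size $\leq 3$ with only the $k-1$ non-zero colours forces $k-1 \geq \lceil(\Delta-1)/3\rceil$, i.e.\ $k \geq \lceil(\Delta-1)/3\rceil + 1$. This exceeds $\lfloor\Delta/3\rfloor + 1$ whenever $\Delta\equiv 2\pmod 3$. Already for $\Delta=5$ your scheme would need $3$ colours while Theorem~\ref{thm:five} (and this corollary) guarantee $2$.

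Second, your arithmetic is wrong: with $k=\lfloor\Delta/3\rfloor+1$ one has $k-1=\lfloor\Delta/3\rfloor$, which is \emph{not} $\geq\lceil\Delta/3\rceil$ unless $3\mid\Delta$. To actually achieve $D(G)\leq\lfloor\Delta/3\rfloor+1=\lfloor\Delta/3+1\rfloor$ colours, you must keep using colour $0$ on some non-root vertices exactly as the original procedure does, retaining the healthiness bookkeeping (and hence the generation-bound termination argument), rather than quarantining colour $0$ to $K_R$.
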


\section{A general bound}

In light of the above corollary it is only natural to ask how much the bound for $D(G)$ changes if we allow automorphisms moving only finitely many vertices. Recall that by \cite{imrich} we know that $D(G) \leq \Delta$ for any infinite graph with maximum degree $\Delta$. On the other hand, it is not hard to construct examples of infinite graphs with $D(G) = \Delta-1$. Simply attach $\Delta - 1$ new leaves to a vertex of degree $1$ in any infinite graph. Hence the bound given in Theorem \ref{thm:main} (which we recall below) is tight.

\begingroup
\def\thethm{\ref{thm:main}}
\begin{thm}
Let $G$ be an infinite connected graph with maximum degree $\Delta \geq 3$, then $D(G) \leq \Delta -1$.
\end{thm}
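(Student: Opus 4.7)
The plan is to adapt the recursive colouring construction from the proof of Theorem \ref{thm:five}, replacing the two-colour palette by a palette of $\Delta - 1$ colours. The infinite motion hypothesis is dropped, but the enlarged palette more than compensates: a moving $k$-tuple with $k \leq \Delta - 1$ can be broken directly by assigning its members pairwise distinct colours, without any appeal to synchronisation or infinite motion. As before, I would reserve colour $0$ for the distinguished root subgraph and use the remaining $\Delta - 2$ colours to break moving tuples; the machinery of healthy colourings together with Lemmas \ref{lem:healthyextend} and \ref{lem:healthylimit} transfers with only minor adjustments.

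First I would dispose of the case where $G$ is a tree. If $G$ is leafless, Theorem \ref{thm:treelike} already gives $D(G) \leq 2 \leq \Delta - 1$. If $G$ is a tree with a leaf, a direct greedy BFS from a suitable starting vertex (for instance, a vertex of minimum degree) yields a distinguishing colouring with at most $\Delta - 1$ colours: at each processed vertex, its at most $\Delta - 1$ forward neighbours receive pairwise distinct colours, and this breaks every automorphism of the rooted tree. Hence we may assume that $G$ contains a cycle, pick an induced cycle $C$, a geodesic ray $P$ meeting $C$ only at its start, set $R = P + C - s$ for $s$ a neighbour on $C$ of the start of $P$, and colour $R$ entirely with colour $0$.

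Next I would run the recursion in direct analogy with the proof of Theorem \ref{thm:five}. At each step, pick an uncharted vertex $x$ at minimum distance from $C$, let $X$ be its moving tuple, and distinguish cases by $|X|$. For $|X| \leq \Delta - 2$, I would assign pairwise distinct colours from $\{1, \dots, \Delta - 2\}$ to the members of $X$ and then colour the remaining neighbourhood with colour $1$, adjusted to preserve the health condition and the invariants analogous to those of the proof of Theorem \ref{thm:five}. For $|X| = \Delta - 1$, I would assign the $\Delta - 1$ available colours to the members, using colour $0$ only under conditions that keep the monochromatic component $K_R$ well-behaved. For $|X| = \Delta$, the vertex in $R$ responsible for the tuple must have all $\Delta$ of its neighbours in $X$ and no others, so $X$ necessarily has uncommon neighbours outside $R$; a generalisation of Lemma \ref{lem:synchronise} to tuples of size up to $\Delta$, which is available because we have enough colours to give uncommon neighbours distinguishing patterns, lets us break $X$ through its uncommon neighbours at the cost of colouring a somewhat larger portion of $G$ in a single step.

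The main obstacle will be precisely the case $|X| = \Delta$, where Lemma \ref{lem:synchronise} as stated does not apply and the interplay between uncommon neighbours and the larger palette has to be spelled out carefully, especially so as not to create fresh moving $\Delta$-tuples in the process of breaking the old one. Once this step is secured, the rest of the argument follows that of Theorem \ref{thm:five} almost verbatim: the limit colouring is healthy by Lemma \ref{lem:healthylimit}, the monochromatic component $K_R$ of $R$ is the unique infinite colour-$0$ component with the prescribed ray-plus-leaves shape, hence every colour preserving automorphism must fix $R$ pointwise, and Lemma \ref{lem:limitcolouring} applied with $\bigcup_i S_i = V$ delivers the distinguishing property of the limit colouring.
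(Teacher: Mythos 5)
Your proposal attempts to transplant the whole machinery of Theorem~\ref{thm:five} --- moving tuples, synchronisation, healthy colourings --- into a setting where none of it is needed, and in doing so it leaves a genuine gap exactly where you flag one. The case $|X| = \Delta$ requires a generalisation of Lemma~\ref{lem:synchronise} to tuples of size $\Delta$, and you assert this generalisation ``is available because we have enough colours,'' but no argument is given; the paper's remark after Lemma~\ref{lem:synchronise} explicitly points out that the lemma already fails for quadruples (two disjoint copies of $K_{2,2}$ between two moving quadruples defeat the conclusion), so there is no reason to expect a routine extension. Your reasoning that a moving $\Delta$-tuple forces a root vertex with all $\Delta$ of its neighbours in the tuple is also incomplete: it only applies to tuples consisting of fresh neighbours of $R$, not to tuples of already-coloured vertices arising deeper in the recursion, which is exactly where the bookkeeping becomes delicate. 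The tree-with-leaves case is similarly underargued, since a greedy BFS from a vertex of minimum degree does not by itself guarantee that the chosen root is fixed by every colour-preserving automorphism.

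The paper's actual proof bypasses all of this. After fixing the root set $R$ (a ray-plus-cycle, or a ray starting at a leaf in the tree case), it grows a \emph{connected} coloured region $S_i$ and at each step considers the equivalence classes of uncoloured vertices under ``same neighbourhood in $S_i$.'' Connectedness of $S_i$ guarantees each class has size at most $\Delta-1$, so its members can simply be given pairwise distinct colours, with colour $0$ avoided whenever the class has at most $\Delta-2$ elements. The limit colouring is domain distinguishing for $(G,R)$ by Lemmas~\ref{lem:extendcolouring} and~\ref{lem:limitcolouring} alone --- no moving tuples, no synchronisation, no health conditions --- and $R$ is then recovered as the unique monochromatic colour-$0$ ray whose first vertex is a leaf or shares an outside common neighbour with another vertex of $R$, via a short argument about the order in which colour-$0$ vertices can appear. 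Once the infinite-motion hypothesis is dropped, this direct greedy colouring is both simpler and more robust than the Theorem~\ref{thm:five} machinery, which was designed precisely to cope with the scarcity of colours under that hypothesis.
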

\addtocounter{thm}{-1}
\endgroup

\begin{proof}
If $G$ is a leafless tree, then we are done by Theorem \ref{thm:treelike}. 

Hence we may assume that $G$ has either a cycle or a vertex of degree $1$. If there is a cycle, then let $C$ be an induced cycle, let $P$ be a geodesic ray which meets $C$ only at its starting point, and let $s$ be a neighbour of the starting point of $P$ on $C$. Define $R=P+C-s$. If $G$ is a tree with leaves, then let $R$ be a ray starting at a leaf.

We will now inductively define domain distinguishing partial colourings $c_i$ of $(G,R)$ with connected domain $S_i = \domain (c_i)$. For $c_0$ colour all vertices of $R$ with colour $0$. This is trivially domain distinguishing for $(G,R)$ and clearly $S_0 = R$ is connected. 

For the recursion step, define an equivalence relation on $V \setminus S_i$ by $x \sim y$ if $x$ and $y$ have the same neighbours in $S_i$. Denote by $[x]$ the equivalence class of $x$ with respect to this relation. Let $v \notin S_i$ be an uncoloured vertex which lies as close as possible to $r$. Then $v$ has a neighbour $s$ in $S_i$. Since $S_i$ is connected, $s$ has at most $\Delta - 1$ neighbours outside of $S_i$ whence $|[v]| \leq \Delta - 1$. Colour  all vertices in $[v]$ with different colours. If $|[v]| \leq \Delta - 2$, avoid the colour $0$. The resulting colouring $c_{i+1}$ clearly is domain distinguishing for $(G,S_i)$. By Lemma \ref{lem:extendcolouring} and the induction hypothesis it is also domain distinguishing for $(G,R)$.

The sequence $c_i$ satisfies the conditions of Lemma \ref{lem:limitcolouring}, thus we get a limit colouring $c$ which is distinguishing for $(G,R)$. In order to show that $c$ is distinguishing it suffices to show that every colour preserving automorphism must fix $R$ pointwise. 

This can be achieved by showing that $R$ is the unique monochromatic ray of colour $0$ satisfying the following property: 
\begin{enumerate}[label=(\textasteriskcentered)]
\item \label{itm:Rfirstvertex} the first vertex is either a leaf, or has a common neighbour outside of $R$ with another vertex of $R$.
\end{enumerate}

Clearly $R$ satisfies \ref{itm:Rfirstvertex}. If $v$ is a neighbour of $R$, then $|[v]| \leq \Delta - 2$. This is obvious, if $v$ is a neighbour of an inner vertex of the ray induced by $R$. For the starting vertex it follows from the fact that this vertex either has degree $1$ or has a common neighbour with an inner vertex of the ray. Hence no neighbour of $R$ is coloured with colour $0$ showing that any other ray satisfying \ref{itm:Rfirstvertex} must be disjoint from $R$.

Further observe that if $u$ and $v$ are neighbours which are both coloured $0$ then they must have been coloured in different steps of the construction. If $v$ is coloured later than $u$, then $\Delta - 1$ neighbours of $u$ (including $v$) are coloured in the same step as $v$.

Now let $Q = q_0q_1q_2\dots$ be a monochromatic ray of colour $0$ disjoint from $R$. We claim that the vertices of $Q$ are coloured in order, i.e.\ $q_i$ is coloured before $q_{i+1}$. Indeed, if $q_{i+1}$ is coloured earlier than $q_i$, then let $q$ be the first vertex of $\{q_j\mid j \geq i\}$ that is coloured. Both neighbours of $q$ on $Q$ are coloured later than $q$. Hence by the above observation $q$ has at least $2(\Delta - 1)$ neighbours, a contradiction to $\Delta$ being the maximum degree. 

At least one neighbour of $q_0$ is coloured before $q_0$, so if $q_0$ is a leaf, then $q_1$ can't be coloured after $q_0$. Furthermore, once $q_1$ is coloured all neighbours of $q_0$ are coloured. But for $i>0$ the only neighbour of $q_i$ which is not coloured later than $q_i$ is $q_{i-1}$, whence $q_i$ can't have any common neighbours with $q_0$ outside of $Q$. Thus $Q$ does not satisfy~\ref{itm:Rfirstvertex}.
\end{proof}

\section*{Acknowledgements}
This work was partially supported by Ministry of Science and Higher Education of Poland and OEAD grant no.\ PL 08/2017. Florian Lehner was supported by the Austrian Science Fund (FWF), grant J 3850-N32.

\bibliographystyle{abbrv}
\bibliography{sources.bib}

\begin{thebibliography}{10}

\bibitem{alco-96}
M.~O. {Albertson} and K.~L. {Collins}.
\newblock {Symmetry breaking in graphs.}
\newblock {\em {Electron. J. Comb.}}, 3(1), 1996.

\bibitem{collins-trenk-distinguishingchromatic}
K.~L. {Collins} and A.~N. {Trenk}.
\newblock {The distinguishing chromatic number.}
\newblock {\em {Electron. J. Comb.}}, 13(1):research paper r16, 19, 2006.

\bibitem{draft}
S.~H\"{u}ning, W.~Imrich, J.~Kloas, H.~Schreiber, and T.~W. Tucker.
\newblock Distinguishing graphs of maximum valence $3$.
\newblock arXiv:1709.05797.

\bibitem{imrich}
W.~{Imrich}, S.~{Klav\v zar}, and V.~{Trofimov}.
\newblock {Distinguishing infinite graphs.}
\newblock {\em {Electron. J. Comb.}}, 14(1):research paper r36, 12, 2007.

\bibitem{smith}
W.~{Imrich}, S.~M. {Smith}, T.~W. {Tucker}, and M.~E. {Watkins}.
\newblock {Infinite motion and 2-distinguishability of graphs and groups.}
\newblock {\em {J. Algebr. Comb.}}, 41(1):109--122, 2015.

\bibitem{klavzar-wong-zhu-distinguishinggroupaction}
S.~{Klav\v zar}, T.-L. {Wong}, and X.~{Zhu}.
\newblock {Distinguishing labellings of group action on vector spaces and
  graphs.}
\newblock {\em {J. Algebra}}, 303(2):626--641, 2006.

\bibitem{lehner-random}
F.~{Lehner}.
\newblock {Random colourings and automorphism breaking in locally finite
  graphs.}
\newblock {\em {Comb. Probab. Comput.}}, 22(6):885--909, 2013.

\bibitem{lehner-intermediate}
F.~{Lehner}.
\newblock {Distinguishing graphs with intermediate growth.}
\newblock {\em {Combinatorica}}, 36(3):333--347, 2016.

\bibitem{lehner-edge}
F.~{Lehner}.
\newblock {Breaking graph symmetries by edge colourings.}
\newblock {\em {J. Comb. Theory, Ser. B}}, 127:205--214, 2017.

\bibitem{tucker}
T.~W. {Tucker}.
\newblock {Distinguishing maps.}
\newblock {\em {Electron. J. Comb.}}, 18(1):research paper p50, 21, 2011.

\bibitem{Watkins}
M.~E. {Watkins} and X.~{Zhou}.
\newblock {Distinguishability of locally finite trees.}
\newblock {\em {Electron. J. Comb.}}, 14(1):research paper r29, 10, 2007.

\end{thebibliography}
\end{document}